
\documentclass[12pt]{amsart}
\usepackage{probpack}

\setlength\topmargin{0mm}
\setlength\textheight{210mm}
\setlength\oddsidemargin{5mm}
\setlength\evensidemargin{5mm}
\setlength\textwidth{150mm}

\begin{document}
\title[simple random walk on discrete point processes]
      {Quenched invariance principle for simple random walk on discrete point processes}
\author[N. Kubota]{Naoki Kubota}
\address{Department of Mathematics, Graduate School of Science and Technology, Nihon University, Tokyo 101-8308, Japan}
\email{kubota@grad.math.cst.nihon-u.ac.jp}
\keywords{Random walk in random environment, Invariance principle, Discrete point process}
\subjclass[2010]{Primary 60F17; Secondary 60K37}
\date{}

\begin{abstract}
We consider the simple random walk on random graphs generated by discrete point processes.
This random walk moves on graphs whose vertex set is a random subset of a cubic lattice and whose edges are lines between any consecutive vertices on lines parallel to each coordinate axis.
Under the assumption that the discrete point processes are finitely dependent and stationary, we prove that the quenched invariance principle holds, i.e., for almost every configuration of the point process, the path distribution of the walk converges weakly to that of a Brownian motion. 
\end{abstract}

\maketitle

\section{Introduction}
Random walk in random environment constitutes one of the basic models of random motion in random media.
The validity of the quenched invariance principle (IP for short) for random walk in random media has been intensively investigated in recent years.
In dimensions greater than two,
Berger and Zeitouni \cite{BerZei08arXiv} proved it for ballistic random walk in random environment,
and Rassoul-Agha and Sepp\"al\"ainen \cite{MR2521407} also proved it
using different techniques from \cite{BerZei08arXiv}.
On the other hand, \cite{MR2599199,MR2354160,Mat08,MR2063376} are recent papers on the quenched IP
for random walk among random conductances.
The fourth paper treats random conductances which are bounded from both above and below.
Furthermore, the second and third papers only assume that random conductances are bounded from above,
and the first paper deals with unbounded random conductances from above.
In particular, the quenched IP for simple random walk on percolation clusters is treated
in \cite{MR2278453,MatPia07,MR2063376}.

Models in references listed above treat random walks with bounded jumps and i.i.d.\,configurations.
Our interest is now the quenched IP for models where random walks have unbounded jumps and configurations are not independent. 
We consider a simple random walk on discrete point processes on $\Z^d$, i.e., a simple random walk on graphs
whose vertex set is a random subset of $\Z^d$
and whose edges are lines between any consecutive vertices on lines parallel to each coordinate axis. 
This model was introduced in \cite{BerRos11arXiv}, and its law of large numbers and quenched central limit theorem
(CLT for short) are shown there. 
It is open that the quenched IP is valid for this model, see \cite[Section 11]{BerRos11arXiv}. 
The aim of this paper is to prove the quenched IP when the point process is finitely dependent and stationary. 

We now describe the setting in more detail. 
Let $\Omega :=\{ 0,1 \}^{\Z^d}$ and denote an element of $\Omega$ by $\omega=(\omega(x))_{x \in \Z^d}$. 
The space $\Omega$ is equipped with the canonical product $\sigma$-field $\mathcal{G}$, the canonical shift $T_y \omega (x):=\omega (x+y)$ for $x,y \in \Z^d$ and a probability measure $\Q$. 
Let $\Omega_x:=\{ \omega \in \Omega ;\omega (x)=1 \}$ for $x \in \Z^d$ and we shall assume throughout the paper that $\Q$ satisfies the following assumptions: 
\begin{itemize}
 \item[\bf (A1)] $0<\Q (\Omega_0)<1$ holds, 
 \item[\bf (A2)] There is a positive constant $\ell$ such that 
                 if $A,\,B \subset \Z^d$ satisfy $\inf \{|x-y|;x \in A,\,y \in B \} \geq \ell$ then 
                 $\sigma (\omega (x);x \in A)$ and $\sigma (\omega (x);x \in B)$ are independent, 
 \item[\bf (A3)] $\Q$ is stationary with respect to canonical shifts $(T_x)_{x \in \Z^d}$. 
\end{itemize}
Then we can define the probability measure $\P$ on $\Omega_0$ as follows: 
\begin{align*}
 \P (A):=\Q (A|\Omega_0 ),\qquad A \in \mathcal{G}.
\end{align*}
Denote the expectation with respect to $\Q$ and $\P$ by $E_{\Q}$ and $\E$, respectively. 
Let $\omega \in \Omega$ and set $\mathcal{P}(\omega):=\{ x \in \Z^d;\omega (x)=1 \}$. 
It is clear from assumptions (A1)-(A3) that $\gamma_e(\omega):=\inf \{ k \geq 1;\omega (ke)=1 \}$ has all moments under $\Q$ for all $e \in \Z^d$ with $|e|=1$. 
In particular, $\gamma_e$ is finite $\Q \hyphen \as$, thus let $N_x(\omega)$ be $2d$ nearest neighbors of $x \in \mathcal{P}(\omega)$, i.e., $\mathcal{N}_x(\omega):=\bigl\{ x+\gamma_e (T_x \omega ) e; |e|=1 \bigr\}$. 
We call a path $(x_k)_{k=0}^n$ $\mathcal{P}(\omega)$\textit{-nearest neighbor path} if $x_0 \in \mathcal{P}(\omega)$ and $x_k \in \mathcal{N}_{x_{k-1}}$ holds for every $k \in [1,n]$. 

For each $\omega \in \Omega$, the \textit{simple random walk on the discrete point process} (\textit{RWDPP} for short) is the Markov chain $((X_n)_{n=0}^\infty,(P_\omega^x)_{x \in \mathcal{P}(\omega)})$ with the state space $\mathcal{P}(\omega)$ defined by the following: for each $x \in \mathcal{P}(\omega)$, $P_\omega^x (X_0=x)=1$ and 
\begin{align}
 P_\omega^x (X_{n+1}=z|X_n=y)
 = \begin{cases}
    0 &,z \not\in \mathcal{N}_y(\omega ),\\
    \frac{1}{2d} &,z \in \mathcal{N}_y(\omega ).
   \end{cases}
 \label{eq:intro_1}
\end{align}
We call $P_\omega^x$ the \textit{quenched law} and denote the expectation with respect to $P_\omega^x$ by $E_\omega^x$. 

Our main result is that, $\P \hyphen \as$, the linear interpolation of $(X_n)_{n=0}^\infty$ 
\begin{align}
 B_n(t):=\frac{1}{\sqrt{n}}
         \left\{ X_{\lfloor tn \rfloor}+(tn-\lfloor tn \rfloor )(X_{\lfloor tn \rfloor +1}-X_{\lfloor tn \rfloor}) \right\}
         ,\qquad t \geq 0
 \label{eq:intro_10}
\end{align}
converges weakly to a Brownian motion. 
Fix $T>0$ and let $(C[0,T],\mathcal{W}_T)$ be the space of continuous functions $f:[0,T] \to \R$ equipped with the $\sigma$-field $\mathcal{W}_T$ of Borel sets relative to the topology introduced by the supremum norm. 
The precise statement of our main result is as follows: 

\begin{thm} \label{thm:intro_1}
For all $T>0$ and for $\P \hyphen a.e.\,\omega$, the law of $(B_n(t))_{0 \leq t \leq T}$ on $(C[0,T], \mathcal{W}_T)$ converges, as $n \to \infty$, weakly to the law of a Brownian motion $(B_t)_{0 \leq t \leq T}$ with a diffusion matrix $D$ that is independent of $\omega$. 
\end{thm}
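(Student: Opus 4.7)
The plan is to apply the corrector (harmonic-embedding) method that is by now standard for quenched invariance principles for reversible random walks in random environments, following Sidoravicius--Sznitman, Berger--Biskup, Mathieu--Piatnitski and Andres--Deuschel--Slowik. The first observation is that the walk is reversible: for $x,y \in \mathcal{P}(\omega)$ lying on a common coordinate axis, $y \in \mathcal{N}_x(\omega)$ if and only if $x \in \mathcal{N}_y(\omega)$, so the counting measure on $\mathcal{P}(\omega)$ is reversible for $P_\omega^x$ (every vertex has degree exactly $2d$). This opens the door to $L^2$ techniques on the environment space $\Omega_0$.

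Next I would set up the environment seen from the particle $\bar\omega_n := T_{X_n}\omega$, a Markov chain on $\Omega_0$. Reversibility together with (A3) shows that $\P$ is a reversible invariant probability measure for this chain, while (A2)--(A3) imply ergodicity of the shift group $(T_x)_{x \in \Z^d}$ under $\Q$ (standard Kolmogorov $0$--$1$ argument using the finite-range independence), and from this one deduces ergodicity of the environment chain. I would then construct the corrector $\chi:\Omega_0 \times \Z^d \to \R^d$ as the $L^2$-cocycle making $\psi(\omega,x):= x + \chi(\omega,x)$ discrete harmonic for $P_\omega^x$, by minimizing the Dirichlet energy
\begin{equation*}
\E \Bigl[ \sum_{y \in \mathcal{N}_0(\omega)} \bigl| v + F(T_y\omega) - F(\omega) \bigr|^2 \Bigr]
\end{equation*}
over admissible shift-cocycles $F$, for each $v\in\R^d$. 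The decomposition $X_n = M_n - \chi(\omega,X_n)$ exhibits $M_n$ as a $P_\omega^0$-martingale whose increments are a stationary ergodic functional of $\bar\omega_n$, so the martingale CLT yields the annealed invariance principle for $M_{\lfloor tn\rfloor}/\sqrt n$ with a deterministic diffusion matrix $D$ that is independent of $\omega$.

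The main obstacle is almost-sure sublinearity of the corrector,
\begin{equation*}
\lim_{n\to\infty} \frac{1}{n} \max_{x \in \mathcal{P}(\omega) \cap [-n,n]^d} |\chi(\omega,x)| = 0, \qquad \P\text{-a.s.},
\end{equation*}
which is what transfers the martingale invariance principle from annealed to quenched. The new difficulty, relative to the random-conductance literature, is that the jumps are unbounded: the edge from $x$ to $x+\gamma_e(T_x\omega)e$ has unbounded length. However, (A1)--(A2) yield a Bernoulli-domination argument giving uniform exponential tails for $\gamma_e(T_x\omega)$, which supplies all the moment bounds needed for $L^2$-ergodic estimates on the gradient of $\chi$ and for a Moser-type iteration (or an on-diagonal heat-kernel bound together with a maximal inequality) that upgrades mean sublinearity to the pointwise statement above. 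Adapting these tools to our unbounded-jump but constant-degree graph is the principal technical step.

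Finally, tightness of $B_n$ on $(C[0,T],\mathcal{W}_T)$ follows from sublinearity of $\chi$ combined with a maximal inequality for $M$, while convergence of the finite-dimensional distributions follows from the martingale CLT applied to $M$ together with sublinearity evaluated at times $\lfloor tn\rfloor$. This yields the weak convergence of $B_n$ to Brownian motion with covariance $D$ asserted in Theorem~\ref{thm:intro_1}.
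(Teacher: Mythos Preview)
Your outline follows the same corrector-plus-martingale strategy as the paper, and the structural steps (reversibility, environment process, decomposition $X_n = M_n - \chi(\cdot,X_n)$, Lindeberg--Feller for $M$) are correct and match what the paper does. One small slip: the martingale functional CLT applied under $P_\omega^0$ already gives the \emph{quenched} invariance principle for $M$, not the annealed one; this is what you want anyway.

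The real content, as you acknowledge, is the pointwise sublinearity of $\chi$, and here your sketch is too thin to count as a proof. Saying ``Moser iteration or a heat-kernel bound together with a maximal inequality'' names the genre but not the argument. The paper invokes the Biskup--Prescott framework (Proposition~\ref{prop:corrector_2}), which upgrades sublinearity on average to pointwise sublinearity provided one verifies five conditions: harmonicity and sublinearity on average (which come with the construction of $\chi$), plus (iii) polynomial growth of $\chi$, (iv) diffusive upper bounds $t^{d/2}P_\omega^x(Y_t=x) \le C$ and $E_\omega^x[|Y_t-x|] \le C\sqrt{t}$, and (v) control of big jumps ensuring $|Y_{t\wedge\tau_n}-x| \le c_2 n$. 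Conditions (iii)--(v) are exactly where the unbounded-jump feature bites, and the paper devotes most of its effort there. In particular:
\begin{itemize}
\item The expected-displacement half of (iv) is proved for the \emph{graph distance} first (where jumps have length $1$), and then transferred to Euclidean distance via a comparison $d_\omega(0,x) \ge \rho|x|$ shown to hold except on an event of exponentially small probability (Lemma~\ref{lem:result_1}); this uses a lattice-animal counting argument with ``blocked'' boxes and has no analogue in the bounded-jump literature.
\item Condition (v), and also the fact that $M_1 \in L^2(P_\omega^0)$ for fixed $\omega$ (which your proposal does not address), both rely on showing that a.s.\ jumps out of a ball of radius $n$ have length $O(n)$; this is Proposition~\ref{thm:result_1} and uses Borel--Cantelli with the moment bounds on $\gamma_e$.
\item Polynomial growth (iii) is established by showing that with high probability any site in $[-n,n]^d \cap \mathcal{P}$ can be reached from $0$ by a $\mathcal{P}$-path staying inside $[-n^\theta,n^\theta]^d$; the induction-on-dimension argument (Proposition~\ref{thm:result_4}) is substantial and specific to this model.
\end{itemize}
Your exponential-tail observation for $\gamma_e$ is the correct starting input, but it does not by itself yield any of (iii)--(v); each requires a separate, model-specific argument. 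So the overall approach is right, but the proposal as written has a genuine gap at the sublinearity step.
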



\paragraph{\bf Comments on the proof}
For the proof of the theorem, we mainly follow the strategy of \cite{BerRos11arXiv}.
We write down a corrector, add it to the walk, and obtain a martingale.
Then, the Lindeberg--Feller (functional) CLT for martingales holds.
After that, all we need to do is to show that the corrector is sublinear.
In this paper, we discuss two type of sublinearities, i.e., sublinearity on average and pointwise sublinearity.
(Pointwise sublinearity is stronger than the other, see Proposition \ref{prop:corrector_2} below.)
One of the main results in \cite{BerRos11arXiv} is the quenched CLT,
and for this end it suffices to show sublinearity on average of the corrector.
However, this is not enough to establish the quenched IP.
There is an extra ingredient that is necessary.
We should either (i) show tightness of the scaled walk
or (ii) prove that the corrector has pointwise sublinearity.
The first approach is taken in the papers \cite{MR2278453,Mat08,MR2063376}
to prove the quenched IP for random walk among bounded random conductances.
For the proof of tightness of the scaled walk, heat kernel estimates are used there,
although such estimates fail to hold if
the conductance law has sufficiently heavy tails at zero,
cf. \cite{BerBisHofKoz08}.
The aforementioned fact leads to a natural question.
In the absence of heat kernel estimates, does the quenched IP still hold?
In \cite{MR2354160}, this question was affirmatively answered
by using the second approach above, i.e., establishing pointwise sublinearity of the corrector.
To prove it, they gave a sufficient condition that
is easier to verify than heat kernel estimates used in \cite{MR2278453,Mat08,MR2063376}.
In this paper, we take the second approach above, and follow the footsteps of Biskup and Prescott \cite{MR2354160}
who have used this technique to prove the quenched IP for random walk among random conductances.
In our model, heat kernel estimates have not been proved yet,
and we think that these estimates hold under the finite dependence condition.
In the future, we would like to show the quenched IP under more general conditions as in \cite{BerRos11arXiv},
i.e., stationary ergodic configurations and some moment condition for $\gamma_e(\omega)$.
However, heart kernel estimates on general ergodic configurations
are not progressed well even in random walk among random conductances, see \cite[Section 6]{MR2599199}.
Fortunately, our goal in the present work is the quenched IP,
and we take the second approach, which also prepares us for more general cases.

\paragraph{\bf Organization of the paper}
Let us now describe how the present article is organized. 
Section \ref{sect:corrector} recalls the corrector, which was introduced in \cite{MR834478} by using spectral calculus.
After that, Berger and Biskup \cite{MR2278453} adapted the construction presented there
to simple random walk on the (unique) infinite cluster of supercritical bond percolation in $\Z^d$.
Its properties were analyzed in more detail by Biskup and Prescott \cite{MR2354160}. 
Proposition \ref{prop:corrector_2} provides a sufficient condition for sublinearity of the corrector in our model. 

In Section 3 we give the proof of Theorem \ref{thm:intro_1}. 
We adopt the approach in \cite{MR2354160} and therefore it suffices to prove pointwise sublinearity of the corrector. 
Therefore, the proof of Theorem 1.1 is just a sketch, and we concentrate our effort on proving pointwise sublinearity. 
Propositions~\ref{thm:result_1}, \ref{thm:result_2}, \ref{thm:result_3} and \ref{thm:result_4} guarantee that a sufficient condition for pointwise sublinearity of the corrector is satisfied in our model. 

We close this section with some general notation. 
Let us denote $|x|_\infty:=\max_{1 \leq i \leq d} |x_i|$ for $x=(x_1,\dots,x_d) \in \Z^d$ and define $B_\infty (x,n):=\{ y \in \Z^d;|x-y|_\infty \leq n \}$. 
The $L^2(\P)$-norm for random variables is denoted by $||\cdot||_2$ and the canonical unit vectors of $\R^d$ are $e_1,\dots,e_d$. 

\section{Corrector} \label{sect:corrector}
In this section, we introduce the corrector, which plays a key role in the proof of the quenched IP. 
Let us first state the following proposition. 
For the proof of this proposition we refer the reader to \cite[Sections 7 and 9]{BerRos11arXiv}. 

\begin{prop} \label{prop:corrector_1}
There exists a function $\chi:\Z^d \times \Omega_0 \to \R^d$ such that the following properties hold: 
\begin{enumerate}
 \item (Shift invariance) For $\P \hyphen a.e.\, \omega$, we have
       \begin{align*}
        \chi (x,\omega )-\chi (y,\omega )=\chi (x-y,T_y\omega ),\qquad x,\, y \in \mathcal{P}(\omega ).
       \end{align*}
 \item (Harmonicity) For $\P \hyphen a.e.\,\omega$, the function 
       \begin{align*}
        x \longmapsto \chi (x,\omega )+x
       \end{align*}
       is harmonic on $\mathcal{P}(\omega)$ with respect to the transition probability \eqref{eq:intro_1}, 
       i.e., for every $x \in \mathcal{P}(\omega)$, 
       \begin{align*}
        \frac{1}{2d} \sum_{y \in \mathcal{N}_x(\omega )} (y+\chi(y,\omega ))=x+\chi (x,\omega ).
       \end{align*}
 \item (Square integrability) There exists a positive constant $c_1$ such that 
       \begin{align*}
        \sum_{y \in \Z^d} ||(\chi (x,\cdot )-\chi (y,\cdot ))
        \1{\{ x \in \mathcal{P} \}} \1{\{ y \in \mathcal{N}_x \}}||_2^2
        \leq c_1
       \end{align*}
       holds for all $x \in \Z^d$. 
 \item (Sublinearity on average) Let $\epsilon$ be an arbitrary positive number. 
       Then, for $\P \hyphen a.e.\,\omega$, 
       \begin{align*}
        \lim_{n \to \infty} \frac{1}{n^d} \sum_{\substack{x \in \mathcal{P}(\omega)\\ |x| \leq n}}
        \1{\{ |\chi (x,\omega )| \geq \epsilon n \}} =0.
       \end{align*}
\end{enumerate}
\end{prop}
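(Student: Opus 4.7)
The plan is to construct $\chi$ by the Hilbert-space / orthogonal-projection method used in \cite{MR2278453} and \cite{MR2354160}, adapting it to the unbounded-jump setting. The first observation is that the graph on $\mathcal{P}(\omega)$ is undirected and $2d$-regular: $y \in \mathcal{N}_x(\omega)$ if and only if $x \in \mathcal{N}_y(\omega)$, as one sees directly from the identity $\gamma_{-e}(T_{\gamma_e(\omega)e}\omega) = \gamma_e(\omega)$. Combined with the mass transport principle applied to the stationary law $\Q$, this shows that $\P = \Q(\cdot|\Omega_0)$ is a reversible invariant measure for the environment process $\omega_n := T_{X_n}\omega$ on $\Omega_0$, while finite dependence (A2) upgrades stationarity to ergodicity under both the shifts $(T_y)_{y \in \Z^d}$ and the chain $\omega_n$.

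Next I would work in the Hilbert space $\mathcal{L}^2$ of antisymmetric, shift-covariant edge fields $u(x,y;\omega)$ on $\{(x,y) : y \in \mathcal{N}_x(\omega)\}$, equipped with the inner product
\begin{align*}
 \langle u,v\rangle := \E\Bigl[\frac{1}{2d}\sum_{y\in\mathcal{N}_0(\omega)}u(0,y;\omega)\cdot v(0,y;\omega)\Bigr],
\end{align*}
and let $\mathcal{L}^2_{\mathrm{pot}}$ denote the closed subspace generated by gradients of local functions. Defining $\nabla\chi$ to be the orthogonal projection of $-V$ (with $V(x,y;\omega):=y-x$ the position field) onto $\mathcal{L}^2_{\mathrm{pot}}$ and letting $\chi$ be the path integral of $\nabla\chi$ along $\mathcal{P}(\omega)$-nearest neighbor paths from $0$, normalized by $\chi(0,\omega)=0$, produces the claimed function. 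Property (1) is built into the shift-covariant construction; property (2) is precisely the statement that $V+\nabla\chi$ lies in $(\mathcal{L}^2_{\mathrm{pot}})^\perp$, i.e.\ is divergence-free, which is exactly what the orthogonal projection provides; property (3) follows from the contraction $\|\nabla\chi\|_{\mathcal{L}^2} \le \|V\|_{\mathcal{L}^2}$, together with $\E\bigl[\sum_{|e|=1}\gamma_e^2\bigr] < \infty$, itself a consequence of the exponential tails of $\gamma_e$ under (A1)--(A2).

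The main obstacle is property (4). The subtlety is that $\chi(x,\omega)$ is only shift-covariant up to an additive random constant (by (1)), so the multiparameter ergodic theorem does not apply to $|\chi(x,\omega)|$ directly. The plan is to follow the strategy of \cite{MR2278453}: first apply the ergodic theorem to bounded local functionals of $\chi$ to obtain a weak $L^1$ form of sublinearity, then upgrade to the $\P$-a.s.\ statement using reversibility of $\omega_n$ combined with a diffusive variance bound (a heat-kernel-type estimate) for $X_n$. Since jumps here are unbounded, a truncation argument is needed in the variance estimate, with the tail error controlled by the moments of $\gamma_e$ furnished by finite dependence; pushing this truncation through carefully is where the real work lies.
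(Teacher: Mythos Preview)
The paper does not prove this proposition itself; immediately after the statement it simply refers the reader to \cite[Sections 7 and 9]{Ros10arXiv}. Your outline --- the Hilbert-space projection of the position field onto the closure of gradients of local functions in the space of shift-covariant square-integrable edge fields --- is exactly the standard construction carried out there (following \cite{MR2278453,MR2354160}), so on items (i)--(iii) you are in agreement with the cited proof, and your remarks on reversibility of the environment chain and on ergodicity via finite dependence are correct.

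Where your plan goes astray is item (iv). Sublinearity \emph{on average} does not require any heat-kernel or diffusive variance input; you are conflating it with sublinearity \emph{everywhere}, which is the subject of the next proposition in the paper and is where the heat-kernel bounds enter. The argument for (iv) in \cite{MR2278453} (their Theorem~5.4) is purely soft: approximate $\nabla\chi$ in $\mathcal{L}^2$ by gradients $\nabla\psi_m$ of bounded local functions (this is just density of the generating set in $\mathcal{L}^2_{\mathrm{pot}}$); for each fixed $m$ the spatial ergodic theorem gives $n^{-1}\max_{|x|\le n}|\psi_m(T_x\omega)-\psi_m(\omega)|\to 0$ a.s.; and the remainder $\chi-(\psi_m\circ T_x-\psi_m)$ is controlled on average by Chebyshev together with the $\mathcal{L}^2$ bound $\|\nabla\chi-\nabla\psi_m\|\to 0$. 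No truncation of jumps is needed, because only the finiteness of $\|\nabla\chi\|_{\mathcal{L}^2}$ (your item (iii)) is used. Once you separate the two sublinearity statements, (iv) becomes straightforward and the ``real work'' you describe disappears from this proposition.
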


We call the above function $\chi$ the \textit{corrector}. 
It is easy to check from (ii) of Proposition \ref{prop:corrector_1} that $X_n$ is decomposed into the difference between a $P_\omega^0$-martingale $M_n$ and $\chi (X_n)$, i.e., for $n \geq 0$, 
\begin{align}
 M_n^{(\omega)}:=X_n+\chi (X_n,\omega ).
 \label{eq:corrector_15}
\end{align}
For this reason, to show the quenched IP for $(X_n)_{n=0}^\infty$ we adopt the method
which consists of using the Lindeberg--Feller (functional) CLT for martingale $(M_n)_{n=0}^\infty$ and an estimate on the corrector $\chi$.

Here, we explain the historical context of the above approach.
This approach was applied by Sidoravicius and Sznitman \cite{MR2063376} to prove the quenched IP
for simple random walk on percolation clusters in dimensions greater than three
and random walk among bounded random conductances.
They showed tightness of the scaled walk by using heat kernel estimates.
Then, the quenched IP for the random walk $(X_n)_{n=0}^\infty$ follows
from tightness
if we have that for almost every $\omega$ and  for all $t >0$,
\begin{align*}
 \lim_{n \to \infty} \frac{\chi (X_{\lfloor tn \rfloor},\omega )}{\sqrt{n}}=0\quad
 \textrm{in $P_\omega^0$-probability}.
\end{align*}
On the other hand, Berger and Biskup \cite{MR2278453} successfully implemented the proof of the quenched IP for simple random walk on percolation clusters in two dimensions.
They proved pointwise sublinearity  only in two dimensions, which is stronger than sublinearity on average
(see Proposition \ref{prop:corrector_2} below).
Using this instead of tightness plus sublinearity on average,
we can see that heat kernel estimates are not necessary for establishing the quenched IP.
However, in higher dimensions, they took the same approach as in \cite{MR2063376}.
Thus, they conjectured that the strategy taken in dimensions two is true in all dimensions.
Biskup and Prescott \cite{MR2354160} affirmatively answered this problem for random walk among bounded random conductances, which contains simple random walk on percolation clusters as a special case, for all dimensions bigger than one.
One of the main results of their paper is to give a sufficient condition for pointwise sublinearity,
which is, roughly speaking, sublinearity on average plus some estimates weaker than the heat kernel estimates.

Let us now turn to our model.
As we see above,
it is sufficient to show pointwise sublinearity of the corrector $\chi$:
\begin{align*}
 \lim_{n \to \infty} \max_{\substack{x \in \mathcal{P} (\omega )\\ |x| \leq n}}
 \frac{|\chi (x,\omega )|}{n}=0,\qquad \textnormal{for $\P \hyphen \ae\,\omega$}.
\end{align*}
The following proposition gives a sufficient condition for pointwise sublinearity of the corrector $\chi$. 
In \cite[Theorem 7.15]{Kum10lec}, a similar statement is shown for random walk among i.i.d.\,random conductances
and $C_{\infty,\alpha}$ in \cite{Kum10lec} plays the role of $\mathcal{P}(\omega)$.
Its proof is valid in the stationary and ergodic setting,
since the argument is done for each configuration satisfying suitable conditions
and we only use basic properties of martingales and Markov processes.
We thereby omit the proof and refer the reader to the proof of \cite[Theorem 7.15]{Kum10lec}.

Let $(N_t)_{t \geq 0}$ be the standard Poisson process with parameter one and we consider the continuous-time Markov chain $(Y_t:=X_{N_t})_{t \geq 0}$. 

\begin{prop} \label{prop:corrector_2}
Suppose that a function $\psi (\cdot ,\omega) :\mathcal{P}(\omega) \to \R^d$ satisfies the following conditions (i)-(v)
for $\P \hyphen a.e.\,\omega$: 
\begin{enumerate}
 \item (Harmonicity) The function 
       \begin{align*}
        x \longmapsto \psi (x,\omega )+x
       \end{align*}
       is harmonic on $\mathcal{P}(\omega)$ with respect to the transition probability \eqref{eq:intro_1}. 
 \item (Sublinearity on average) We have for every $\epsilon >0$, 
       \begin{align*}
        \lim_{n \to \infty} \frac{1}{n^d} \sum_{\substack{x \in \mathcal{P}(\omega )\\ |x| \leq n}}
        \1{\{ |\psi (x,\omega )| \geq \epsilon n \}}=0.
       \end{align*}
 \item (Polynomial growth) We have 
       \begin{align*}
        \lim_{n \to \infty} \max_{\substack{x \in \mathcal{P}(\omega )\\ |x| \leq n}}
        \frac{|\psi (x,\omega )|}{n^\theta}=0
       \end{align*}
       for some deterministic $\theta >0$. 
 \item (Diffusive upper bounds) We have for a deterministic sequence $b_n=o(n^2)$, 
       \begin{align}
        &\sup_{n \geq 1} \max_{\substack{x \in \mathcal{P}(\omega )\\ |x| \leq n}} \sup_{t \geq b_n}
         t^{d/2} P_\omega^x (Y_t=x)<\infty ,
         \label{eq:corrector_1}\\
        &\sup_{n \geq 1} \max_{\substack{x \in \mathcal{P}(\omega )\\ |x| \leq n}} \sup_{t \geq b_n}
         \frac{E_\omega^x [|Y_t-x|]}{\sqrt{t}}<\infty .
         \label{eq:corrector_2}
       \end{align}
 \item (Control of big jumps) Let $\tau_n:=\inf \{ t \geq 0;|Y_t-Y_0| \geq n \}$. 
       There exist $c_2=c_2(\omega) \geq 1$ and $N=N(\omega) \geq 1$ such that we have for all $t>0$ and $n \geq N$, 
       \begin{align*}
        \max_{\substack{x \in \mathcal{P}(\omega )\\ |x| \leq n}}
        P_\omega^x (|Y_{t \wedge {\tau_n}}-x|>c_2 n)=0.
       \end{align*}
\end{enumerate}
Under these conditions, the function $\psi$ satisfies the pointwise sublinearity
\begin{align}
 \lim_{n \to \infty} \max_{\substack{x \in \mathcal{P} (\omega )\\ |x| \leq n}}
 \frac{|\psi (x,\omega )|}{n}=0\qquad \textrm{for $\P \hyphen a.e.\,\omega$.}
 \label{eq:corrector_3}
\end{align}
\end{prop}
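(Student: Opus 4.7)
The plan is to argue by contradiction. Suppose \eqref{eq:corrector_3} fails; then there exist $\epsilon>0$, an event of positive $\P$-measure on which, along some subsequence $n_k\to\infty$, one finds $x_k\in\mathcal{P}(\omega)\cap B_\infty(0,n_k)$ with $|\psi(x_k,\omega)|\geq \epsilon n_k$. Fix such $\omega$ and abbreviate $x=x_k$, $n=n_k$. By the harmonicity in (i), $M_s:=\psi(Y_s,\omega)+Y_s$ is a $P_\omega^x$-martingale for the continuous-time chain, and applying optional stopping at $\sigma:=t\wedge\tau_n$ (with $t>0$ a free parameter and $n$ large enough that (v) applies) yields, using the almost sure bound $|Y_\sigma-x|\leq c_2 n$ from (v) -- so that the stopped martingale is bounded --
\[
 \psi(x)+x=E_\omega^x\bigl[\psi(Y_\sigma)+Y_\sigma\bigr],\qquad
 |\psi(x)|\leq E_\omega^x\bigl[|\psi(Y_\sigma)|\bigr]+E_\omega^x\bigl[|Y_\sigma-x|\bigr].
\]
Splitting the last expectation at $\tau_n$ and combining the $L^1$-diffusive bound \eqref{eq:corrector_2} with $|Y_{\tau_n}-x|\leq c_2 n$ and the tail estimate $P_\omega^x(\tau_n\leq t)\leq C\sqrt t/n$ -- a standard strong-Markov decomposition again based on \eqref{eq:corrector_2} -- yields $E_\omega^x[|Y_\sigma-x|]\leq C(1+c_2)\sqrt t$. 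At $t=\alpha n^2$, which is $\geq b_n$ eventually since $b_n=o(n^2)$, this is $C(1+c_2)\sqrt\alpha\,n$, made small by choosing $\alpha$ small.

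To bound $E_\omega^x[|\psi(Y_\sigma)|]$ I split in the same way. For $E_\omega^x[|\psi(Y_t)|;\,t<\tau_n]$, cut the integrand at the threshold $\delta n$: the sub-threshold part contributes at most $\delta n$, and the super-threshold part is at most $\phi((1+c_2)n)\cdot P_\omega^x(Y_t\in S_n)$, where $\phi(m):=\max_{|y|\leq m}|\psi(y,\omega)|$ and $S_n:=\{y\in\mathcal{P}(\omega):|y-x|<n,\,|\psi(y)|>\delta n\}$. The on-diagonal bound \eqref{eq:corrector_1} combined with reversibility of the walk produces the off-diagonal estimate $P_\omega^x(Y_t=y)\leq Ct^{-d/2}$ for $t\geq b_n$, and (ii) forces $|S_n|=o(n^d)$; thus $P_\omega^x(Y_t\in S_n)\leq Ct^{-d/2}\,o(n^d)=o(\alpha^{-d/2})$ at $t=\alpha n^2$. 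The exit piece $E_\omega^x[|\psi(Y_{\tau_n})|;\,\tau_n\leq t]$ is at most $\phi((1+c_2)n)\cdot P_\omega^x(\tau_n\leq t)\leq\phi((1+c_2)n)\cdot C\sqrt\alpha$. Taking the maximum over $|x|\leq n$ produces the self-similar recursion
\[
 \phi(n)\leq A\,n+q_n\,\phi\bigl((1+c_2)n\bigr),\qquad
 A:=\delta+C(1+c_2)\sqrt\alpha,\quad q_n\to C\sqrt\alpha\text{ as }n\to\infty.
\]

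Writing $B:=1+c_2$ and iterating the recursion $k$ times produces $\phi(n)\leq An\sum_{j=0}^{k-1}(q'B)^j+(q')^k\phi(B^kn)$ for $n$ large enough, where $q'>C\sqrt\alpha$ is any chosen upper envelope of $q_n$. Given the polynomial-growth exponent $\theta>0$ from (iii), choose $\alpha$ so that both $q'B<1$ and $q'B^\theta<1$; the polynomial bound $\phi(m)\leq Cm^\theta$ then makes $(q')^k\phi(B^kn)\to 0$ as $k\to\infty$, leaving $\phi(n)\leq An/(1-q'B)$ eventually. A further shrinking of $\delta$ and $\alpha$ makes the ratio $A/(1-q'B)$ strictly less than $\epsilon$, contradicting $|\psi(x_k)|\geq\epsilon n_k$. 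The main technical obstacles are the strong-Markov control of $P_\omega^x(\tau_n\leq t)$ and the calibration of the parameters $\alpha,\delta$ to the exponent $\theta$ in the iteration; both are standard and carried out in detail in \cite[Theorem 7.15]{Kum10lec}. Since only condition (ii) invokes the environment beyond the deterministic estimates (i), (iii)--(v), the extension from the i.i.d.\ setting to the present stationary ergodic environment noted in \cite[Remark 5.5]{MR2354160} is immediate.
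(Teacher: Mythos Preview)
Your proposal is correct and follows exactly the approach the paper intends: the paper does not give its own proof of this proposition but simply defers to \cite[Theorem~7.15]{Kum10lec} together with \cite[Remark~5.5]{MR2354160}, and what you have written is a faithful sketch of precisely that argument (optional stopping for the harmonic deformation, splitting at $\tau_n$, the off-diagonal heat-kernel bound via reversibility and \eqref{eq:corrector_1}, the recursion $\phi(n)\le An+q_n\phi((1+c_2)n)$, and its closure using the polynomial-growth hypothesis). In fact you go further than the paper by spelling out the mechanism rather than merely citing it.
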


\section{Proof of main result}
The aim of this section is to prove Theorem \ref{thm:intro_1}. 
Let us first give the sketch of the proof, see \cite[Sections 6.1 and 6.2]{MR2278453} or \cite[Theorem 2.1]{MR2354160} for more details. 
We now suppose the conditions 
\begin{itemize}
 \item[\bf (S)] The corrector $\chi$ satisfies the pointwise sublinearity \eqref{eq:corrector_3}, 
 \item[\bf (B)] For $\P \hyphen \ae \,\omega$ and each $n \geq 0$ there exists a positive constant $K(\omega,n)$ 
                such that $X_n \leq K(\omega ,n)$ holds $P_\omega^0 \hyphen \as$ 
\end{itemize}
Let $(M_n^{(\omega)})_{n=0}^\infty$ be a martingale in \eqref{eq:corrector_15} and set $\mathcal{F}_k:=\sigma (X_0,\dots,X_k)$. 
Fix a vector $a \in \R^d$. 
$(M_n^{(\omega)})_{n=0}^\infty$ is an $L^2(P_\omega^0)$-martingale since condition (B) implies that $X_n$ and $\chi (X_n,\omega)$ are bounded under $P_\omega^0$. 
We can define the random variable 
\begin{align*}
 V_{n,m}^{(\omega )}(\epsilon )
 := \frac{1}{n} \sum_{k=0}^m E_\omega^0
    \Bigl[ (a \cdot (M_{k+1}^{(\omega )}-M_k^{(\omega )}))^2
           \,\1{\{ |a \cdot (M_{k+1}^{(\omega )}-M_k^{(\omega )})|
                   \geq \epsilon \sqrt{n} \}} \Big| \mathcal{F}_k \Bigr]
\end{align*}
for $\epsilon >0$ and $m \leq n$. 
Denoting 
\begin{align*}
 f_K(\omega ):=E_\omega^0 \Bigl[ (a \cdot M_1^{(\omega )})^2 \,\1{\{ |a \cdot M_1^{(\omega )}| \geq K \}} \Bigr]
\end{align*}
for $K \geq 0$, then we may write from \cite[Lemma 6.1]{MR2278453}, 
\begin{align*}
 V_{n,m}^{(\omega )}(\epsilon ):=\frac{1}{n} \sum_{k=0}^m f_{\epsilon \sqrt{n}} \circ T_{X_k} (\omega ).
\end{align*}
The Markov chain on environments, $n \to T_{X_n} \omega$ is ergodic, see \cite[Theorem 3.2]{MR2278453}, thus the conditions of the Lindeberg--Feller (functional) CLT for martingale hold, see \cite[Theorem 7.7.3]{MR1609153}. 
Thereby we conclude that the random continuous function 
\begin{align*}
 t \mapsto \frac{1}{\sqrt{n}} \Bigl\{ a \cdot M_{\lfloor nt \rfloor}^{(\omega )}
           +( nt-\lfloor nt \rfloor )a \cdot (M_{\lfloor nt \rfloor +1}^{(\omega )}
                                              -M_{\lfloor nt \rfloor}^{(\omega )}) \Bigr\}
\end{align*}
converges weakly to Brownian motion with mean zero and covariance 
\begin{align*}
 \E[f_0]=\E[E_\omega^0[(a \cdot M_1^{(\omega)})^2]],
\end{align*}
which is finite from (iii) of Proposition \ref{prop:corrector_1}. 
This can be written as $a \cdot Da$ where $D$ is the matrix with coefficients 
\begin{align*}
 D_{ij}:=\E[E_\omega^0[(e_i \cdot M_1^{(\omega )})(e_j \cdot M_1^{(\omega )})]].
\end{align*}
Applying the Cram\'er-Wold device, see \cite[Theorem 2.9.5]{MR1609153}, we conclude that the linear interpolation of the map $t \mapsto M_{\lfloor nt \rfloor}^{(\omega)}/\sqrt{n}$ scales to $d$-dimensional Brownian motion with covariance matrix $D$. 
Condition (S) implies that $M_n^{(\omega)}-X_n=\chi (X_n,\omega)=o(\sqrt{n})$,
see \cite[page 110]{MR2278453} for more detailed calculations.
The same conclusion hence applies to $t \mapsto B_n(t)$ in \eqref{eq:intro_10}. 

For the completeness of the proof of Theorem \ref{thm:intro_1}, we have to show that conditions (S) and (B) are satisfied. 
Thanks to (ii) and (iv) of Proposition~\ref{prop:corrector_1}, for the proof of (S) it is enough to check conditions (iii), (iv) and (v) of Proposition~\ref{prop:corrector_2}. 
Let us first prove conditions (B) and (v) of Proposition~\ref{prop:corrector_2}. 

\begin{prop} \label{thm:result_1}
Conditions (B) and (v) of Proposition \ref{prop:corrector_2} hold. 
\end{prop}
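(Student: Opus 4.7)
The plan is to treat (B) as essentially formal and to concentrate the work on (v), which I reduce to a tail estimate for the maximum gap size $\gamma_e$ inside a ball.

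For (B), observe that from any vertex $x \in \mathcal{P}(\omega)$ the walk has exactly $2d$ possible successors, and each of them is a deterministic function of $\omega$ via the map $e \mapsto x+\gamma_e(T_x\omega)e$. Hence starting from $0$ the set of possible values of $X_n$ under $P_\omega^0$ has cardinality at most $(2d)^n$, each reachable point being a deterministic function of $\omega$ and a sequence of $n$ choices. Letting $K(\omega ,n)$ be the maximum of $|y|$ over this finite set yields $|X_n|\leq K(\omega ,n)$, $P_\omega^0 \hyphen \as$, which is exactly (B).

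For (v) I would first establish the following key lemma: $\P \hyphen \as$\ there exist a $\P$-a.s.\ finite $N(\omega)$ and a deterministic constant $c>0$ such that
\begin{align*}
 M_n(\omega ):=\max \bigl\{ \gamma_e (T_y \omega )\,;\, y \in B_\infty (0,n),\ |e|=1 \bigr\} \leq c\log n,\qquad n \geq N(\omega ).
\end{align*}
The proof uses that $\gamma_e$ has exponential tails under $\Q$ (and therefore under $\P$): by (A2), the variables $\omega (\ell e),\omega (2\ell e),\dots $ are mutually independent, so by (A1) one obtains $\Q (\gamma_e \geq k)\leq (1-\Q (\Omega_0))^{\lfloor k/\ell \rfloor}$, and since $\P (\cdot )\leq \Q (\Omega_0)^{-1}\,\Q (\cdot )$ the same exponential tail passes to $\P$. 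Combined with stationarity (A3) and a union bound over the $O(n^d)$ sites of $B_\infty (0,n)$ and the $2d$ directions, this gives $\P (M_n \geq c \log n) \leq C n^d \rho ^{c \log n}$ for some $\rho <1$, which is summable in $n$ once $c$ is taken large enough; Borel--Cantelli then delivers the claim.

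Condition (v) is then an immediate corollary. For any starting vertex $x \in \mathcal{P}(\omega )$ with $|x|\leq n$, the continuous-time walk $(Y_t)$ stays in $B_\infty (x,n) \subset B_\infty (0,2n)$ up to time $\tau_n^-$, so the only jump of $(Y_s)_{s \leq \tau_n}$ that can bring it outside $B_\infty (x,n)$ is the one at $\tau_n$, whose size is bounded above by $M_{2n}(\omega )$. Hence
\begin{align*}
 |Y_{t \wedge \tau_n}-x|\leq n + M_{2n}(\omega )\leq 2n
\end{align*}
as soon as $n$ is large enough that $M_{2n}(\omega )\leq n$, so one may take $c_2(\omega )=2$ with $N(\omega )$ as supplied by the lemma. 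The only delicate point in this plan is establishing the exponential tail of $\gamma_e$ from finite dependence and stationarity alone rather than from i.i.d.\ structure, but the $\ell$-subsampling trick above handles it cleanly; everything else is a union bound and Borel--Cantelli.
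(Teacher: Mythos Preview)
Your argument is correct and follows the same overall scheme as the paper's---control the one-step gaps $\gamma_e$ uniformly over a large box via a tail bound and Borel--Cantelli, then read off both (v) and (B)---but the execution differs in two places worth noting. First, for the tail input you extract the \emph{exponential} bound $\Q (\gamma_e \geq k)\leq (1-\Q (\Omega_0))^{\lfloor k/\ell \rfloor}$ via the $\ell$-subsampling trick and then union-bound over the full volume $B_\infty (0,n)$ to get the sharp $M_n(\omega )=O(\log n)$; the paper instead uses only that $\gamma_e$ has all polynomial moments, applies Chebyshev, and union-bounds over the \emph{surface} $\{|x|_\infty =n\}$ (of size $O(n^{d-1})$) to obtain the coarser statement that $\gamma_e (T_x\omega )\leq n$ whenever $|x|_\infty =n$ and $n\geq M(\omega )$. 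Your route gives a stronger conclusion and is a little cleaner; theirs uses a weaker hypothesis on the tails. Second, for (B) you give the soft ``at most $(2d)^n$ deterministic endpoints'' argument, whereas the paper reuses the random integer $N(\omega )$ built for (v) and proves inductively the explicit bound $|X_n|_\infty \leq 2^{n-1}N(\omega )$. Both are valid; yours is shorter, theirs is quantitative.
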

\begin{proof}
Let us first construct a set $\hat{\Omega}_1 \subset \Omega_0$ of full $\P$-measure such that the following holds for $\omega \in \hat{\Omega}_1$: there is a positive integer $M(\omega)$ such that $\gamma_e (T_x \omega ) \leq n$ holds for all $n \geq M(\omega)$ and for all $x \in \mathcal{P}(\omega),\,e \in \Z^d$ with $|x|_\infty =n,\, |e|=1$. 
For $n \geq 1$, we define a subset $A_n$ of $\Omega_0$ by 
\begin{align*}
 A_n:=\{ \gamma_e \circ T_x>n \textrm{ for some $x \in \mathcal{P},\,e \in \Z^d$ with $|x|_\infty=n,\,|e|=1$} \} .
\end{align*}
Note that there exists a constant $c_3=c_3(d)$ such that $\#\{ x \in \Z^d;|x|_\infty =n \} \leq c_3n^{d-1}$ for $n \geq 1$.  Chebyshev's inequality then implies that for $n \geq 1$ and $\epsilon>0$, 
\begin{align*}
 \P (A_n)
 &\leq \sum_{|x|_\infty =n} \sum_{|e|=1} \P (x \in \mathcal{P},\,\gamma_e \circ T_x>n)\\
 &\leq \sum_{|x|_\infty =n} \sum_{|e|=1} \Q (\Omega_0)^{-1} \Q (\gamma_e >n,\,\omega (0)=1)\\
 &\leq c_3n^{d-1} \sum_{|e|=1} \P (\gamma_e >n)\\
 &\leq \frac{c_3n^{d-1}}{n^{d+\epsilon}} \sum_{|e|=1}\E [\gamma_e^{d+\epsilon}].
\end{align*}
It follows that the sequence $(\P (A_n))_{n=0}^\infty$ is summable, and then we can construct the desired set $\hat{\Omega}_1$ from the Borel--Cantelli lemma. 

Next we shall check condition (v) of Proposition \ref{prop:corrector_2}. 
Let $\omega \in \hat{\Omega}_1$ and define a positive integer $N(\omega)$ by 
\begin{align*}
 N(\omega ):=M(\omega )+\max \{ \gamma_e (T_x \omega);x \in B(0,M(\omega )),\,e \in \Z^d \textrm{ with } |e|=1 \} .
\end{align*}
Then the following holds for $\omega \in \hat{\Omega}_1$: 
$\mathcal{N}_y(\omega) \subset B_\infty (x,3n)$ for all $n \geq N(\omega)$ and all $x \in \mathcal{P}(\omega) \cap B_\infty (0,n),\,y \in \mathcal{P}(\omega) \cap B_\infty (x,n)$. 
This ensures that we have for $\P \hyphen \ae \,\omega$, 
\begin{align*}
 \max_{\substack{x \in \mathcal{P}(\omega )\\ |x|_\infty \leq n}} P_\omega^x (|Y_{t \wedge \tau_n}-Y_0|_\infty >3n)=0,\qquad
 t>0,\,n \geq N(\omega). 
\end{align*}
Replacing $|\cdot|_\infty$ by $|\cdot|$, we find that condition (v) of Proposition \ref{prop:corrector_2} is satisfied. 

Finally, let us prove condition (B). 
For fixed $\omega \in \hat{\Omega}_1$ and $n \geq 0$, we consider a $\mathcal{P}(\omega)$-nearest neighbor path $(0=x_0,x_1,\dots,x_n)$ starting at the origin of length $n$. 
It is clear by the choice of $N(\omega)$ that $|x_1|_\infty \leq N(\omega)$ holds. 
Thanks to the choice of $N(\omega)$, $|x_2|_\infty$ has to be less than $2N(\omega)$. 
By induction on the steps of $(0=x_0,x_1,\dots,x_n)$ we can see that for $2 \leq i \leq n$, $|x_i| \leq 2^{i-2}N(\omega)$. 
We hence have the following rough upper bound on the $|\cdot|_\infty$-norm of $x_n$: 
\begin{align*}
 |x_n|_\infty
 \leq N(\omega ) \Biggl( 1+\sum_{i=2}^n 2^{i-2} \Biggr)
 = 2^{n-1} N(\omega ),
\end{align*}
which verifies condition (B). 
\end{proof}

We next prove bound \eqref{eq:corrector_1} in condition (iv) of Proposition \ref{prop:corrector_2}. 

\begin{prop} \label{thm:result_2}
Bound \eqref{eq:corrector_1} in condition (iv) of Proposition \ref{prop:corrector_2} holds. 
\end{prop}
\begin{proof}
From \cite[Claim 5.6]{BerRos11arXiv}, there exist positive constants $c'_3$ and $c'_4$ depending only on
$d$ such that  the following holds $\P \hyphen \as$: 
there exists a positive integer $N$ such that for all $n \geq N$ and all $x,\,y \in \mathcal{P}$, 
\begin{align*}
 P_\omega^x (X_n=y) \leq \frac{c'_3}{(n-c'_4)^{d/2}}.
\end{align*}
The estimate above is the corresponding bound for the discrete-time version of $(Y_t)_{t \geq 0}$,
and hence we finish the proof by using a similar argument below the end of the proof of bound (6.10) in \cite{MR2354160}.
\end{proof}

For \eqref{eq:corrector_2} in condition (iv) of Proposition \ref{prop:corrector_2}, let us introduce some notation
and state a lemma. 
We denote the graph distance on $\mathcal{P}(\omega)$ by $d_\omega (x,y)$.
Note that the random walk $(Y_t)_{t \geq 0}$ has unbounded jumps under the Euclidean distance, however it has bounded jumps under the graph distance $d_\omega$.
In addition, our model satisfies the following regularity condition of volume growth.
Let $D(x,r):=\{ y \in \mathcal{P}(\omega);d_\omega(x,y)<r \}$.
There exists a positive constant $C$ such that
\begin{align*}
 \sum_{y \in D(x,r)}\sum_{z \in \Z^d} \1{\{ z \in \mathcal{N}_y \}}
 \leq Cr^d
\end{align*}
for all $x \in \mathcal{P}(\omega)$ and $r \geq 1$.
Indeed, the number of points in $\mathcal{P}(\omega)$ with the graph distance less than $r \geq 1$ is bounded by $2d^dr^d$.
Therefore, using the same strategy as in \cite[Proposition 3.4]{MR2094438} or \cite[Proposition 6.2]{MR2354160}, we obtain 
\begin{align}
 \sup_{z \in \Z^d} \sup_{t \geq 1} \frac{E_\omega^z[d_\omega (z,Y_t)]}{\sqrt{t}}<\infty .
 \label{eq:result_20}
\end{align}
For this reason, if $d_\omega (x,y)$ is comparable with the Euclidean distance $|x-y|$, then we can get the corresponding bound for the Euclidean distance version. 

\begin{lem} \label{lem:result_1}
There exist positive constants $\rho,\,c_5$, and $c_6$ depending only on $d$ such that 
\begin{align*}
 \Q (0,x \in \mathcal{P},\,d_\omega (0,x) \leq \rho |x|) \leq c_5 \exp \{ -c_6|x| \} .
\end{align*}
\end{lem}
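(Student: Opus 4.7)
The heuristic is that each hop of a $\mathcal{P}$-nearest-neighbor path has random Euclidean length with exponential tails, so $\rho|x|$ hops typically span only $O(\rho|x|)$ Euclidean distance and the event in the lemma is a large deviation. To make this precise, on the event in question there is a $\mathcal{P}$-nearest-neighbor path $0 = y_0,\dots,y_m = x$ of some length $m \leq \rho|x|$, where each step $y_i - y_{i-1} = \gamma_{\epsilon_i}(T_{y_{i-1}}\omega)\,\epsilon_i$ is determined by the choice of direction $\epsilon_i \in \{\pm e_1,\dots,\pm e_d\}$. The triangle inequality yields $\sum_{i=1}^m \gamma_{\epsilon_i}(T_{y_{i-1}}\omega) \geq |x|$, so a union bound over direction sequences gives
\[
  \Q\bigl(0, x \in \mathcal{P},\, d_\omega(0,x) \leq \rho|x|\bigr)
  \,\leq\, \sum_{m=1}^{\lfloor \rho|x|\rfloor} \sum_{(\epsilon_1,\dots,\epsilon_m)}
  \Q\Bigl(\textstyle\sum_{i=1}^m \gamma_{\epsilon_i}(T_{y_{i-1}}\omega) \geq |x|\Bigr).
\]

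\textbf{Two ingredients.} The first ingredient is exponential tails for a single hop. Using (A1) and (A2), the random variables $\omega(j\ell e)$, $j \geq 1$, are independent with $\Q(\omega(j\ell e)=1) = \Q(\Omega_0) > 0$, hence $\Q(\gamma_e > k\ell) \leq (1-\Q(\Omega_0))^k$, and $E_\Q[e^{\lambda\gamma_e}] < \infty$ for all sufficiently small $\lambda > 0$. The second ingredient is an exponential-moment bound along a fixed direction sequence,
\[
  E_\Q\Bigl[\exp\bigl(\lambda \textstyle\sum_{i=1}^m \gamma_{\epsilon_i}(T_{y_{i-1}}\omega)\bigr)\Bigr] \leq C_1(\lambda)^m,
\]
which I would prove inductively in $m$: conditioning on $\omega$ restricted to the sites already examined by the first $m-1$ hops, the line probed by $\gamma_{\epsilon_m}$ overlaps this region only inside an $\ell$-neighborhood of $y_{m-1}$, and (A2) lets us replace the conditional tail of $\gamma_{\epsilon_m}$ by its unconditional version up to a bounded multiplicative constant.

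\textbf{Chernoff and conclusion.} Applying exponential Chebyshev to each summand and plugging into the union bound,
\[
  \Q\bigl(0, x \in \mathcal{P},\, d_\omega(0,x) \leq \rho|x|\bigr)
  \,\leq\, \sum_{m=1}^{\lfloor \rho|x|\rfloor} (2d\,C_1(\lambda))^m\, e^{-\lambda|x|}
  \,\leq\, C_2 \exp\bigl\{|x|\,[\rho\log(2d\,C_1(\lambda)) - \lambda]\bigr\}.
\]
Fixing $\lambda$ with $C_1(\lambda) < \infty$ and then choosing $\rho > 0$ small enough that $\rho \log(2d\,C_1(\lambda)) < \lambda$ yields the claim.

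\textbf{Main obstacle.} The technical heart of the argument is the exponential-moment bound along a fixed direction sequence: the $\gamma_{\epsilon_i}$'s are not independent because successive line segments probed by the path may come arbitrarily close to one another. Assumption (A2) handles this but must be invoked carefully --- for instance via a block or renewal decomposition, or via a conditioning-and-resampling argument that absorbs the correlation between adjacent hops into a bounded constant --- so that a clean product-type bound $C_1(\lambda)^m$ is obtained. Once this is in hand the rest of the argument is a routine Chernoff estimate with a direction-sequence union bound.
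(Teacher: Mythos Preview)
Your route is genuinely different from the paper's, and the obstacle you single out is exactly where the argument is incomplete. The inductive step you sketch rests on the claim that the ray probed at step $m$ meets the already-explored set only in an $\ell$-neighbourhood of $y_{m-1}$, and this is false. For backtracking sequences it fails outright: with $\epsilon_m=-\epsilon_{m-1}$ one has $\gamma_m=\gamma_{m-1}$ deterministically given the history, so $E_{\Q}[\exp(\lambda\gamma_m)\mid \mathcal F_{m-1}]=\exp(\lambda\gamma_{m-1})$, and for the alternating sequence $(\epsilon,-\epsilon,\epsilon,\ldots)$ the bound $E_{\Q}[\exp(\lambda\sum_{i\le m}\gamma_i)]\le C_1(\lambda)^m$ is simply wrong. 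One may restrict to non-backtracking sequences since a geodesic never immediately backtracks, but even then the new ray can cross each of the earlier perpendicular segments at a point arbitrarily far from $y_{m-1}$; under (A2) every such crossing contaminates an $\ell$-window of sites on the new ray, and there can be up to $m-1$ of them. Turning this into a product bound therefore needs an additional idea --- for instance a global control on the number of self-intersections of the exploration --- not the local decoupling you describe.

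The paper bypasses the self-interaction by coarse-graining. It tiles $\Z^d$ by cubes $B_L(x)=x+[-L,L)^d$, $x\in 2L\Z^d$, calling a cube \emph{blocked} when every axis-parallel line through it meets $\mathcal P$ inside the cube; the geometric point is that a $\mathcal P$-hop cannot traverse a blocked cube without terminating in it, so a path with $m$ hops meets at most $m+1$ blocked cubes. If $d_\omega(0,x)$ is small compared with $|x|_\infty/(2L)$, a positive fraction of the $\ge|x|_\infty/(2L)$ cubes intersected by the geodesic must therefore be \emph{unblocked}. Since ``blocked'' depends only on $\omega$ inside the cube and cubes within one sublattice $\mathcal C_L^{(i)}\subset 4L\Z^d$ are at mutual distance $\ge 2L\ge\ell$, these events are independent by (A2)--(A3); a Chernoff bound on the unblocked count in one sublattice, combined with the $(2d)^{2n}$ enumeration of lattice animals of size $n$ containing $0$, gives the exponential decay once $L$ is large enough that $\Q(B_L(0)\text{ unblocked})$ is small. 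This is precisely the ``block decomposition'' you allude to at the end, but applied to the environment rather than to the hop lengths --- which is what keeps the path's self-interaction from ever entering the estimate.
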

\begin{proof}
We consider boxes $B_L(x):=x+\{-L,\dots,L-1 \}^d,\,L \in \N,\,x \in \Z^d$. 
We call $B_L(x)$ \textit{blocked} if the following holds: 
for all $1 \leq i \in [1,d],\,j \in [1,d] \setminus \{ i \}$ and $k_j \in [-L,L)$, there exists $m_i \in [-L,L)$ such that 
\begin{align*}
 \omega (x+k_1e_1+\dots+ m_ie_i+\dots+ k_de_d)=1.
\end{align*}
If $B_L(x)$ is not blocked, then we call it \textit{unblocked}. 
From assumptions (A1)-(A3), we can see that $p_L:=\Q (B_L(0) \textrm{ is unblocked})$ converges to zero as $L \to \infty$. 
For $L \in \N$ we set $\mathcal{C}_L(0):=\{ B_L(x);x \in 4L\Z^d \}$ and then let us introduce for $k \in [1,d]$ and distinct unit vectors $v_1,\dots,v_k \in \{ e_1,\dots,e_d \}$, 
\begin{align*}
 \mathcal{C}_L(v_1,\dots, v_k):=\{ B_L(x);x \in 2L(v_1+\dots+ v_k)+4L\Z^d \} .
\end{align*}
To simplify notation, let $\mathcal{C}_L^{(i)} \,(1 \leq i \leq 2^d)$ be an enumeration of $\mathcal{C}_L(0)$ and $\mathcal{C}_L(v_1,\dots, v_k)$'s. 
In addition, we define $\mathcal{I}_L:=\bigcup_{i=1}^{2^d} \mathcal{C}_L^{(i)}$. 
Choose $\delta :=1/2^{d+1}$ and fix integers $L \geq \ell$
(recall that $\ell$ appears in the finite dependence condition (A2)) and $n \geq 0$. 
Let $\mathcal{A}_n$ be a lattice animal on $2L \Z^d$ of size $n$, which is a connected subset of $2L \Z^d$ of $n$ vertices and containing $0$. 
The event $\Gamma (x,\mathcal{A}_n)$ is then defined as follows: 
$d_\omega (0,x)<\delta |x|_\infty/(2L)$ and a $\mathcal{P}(\omega)$-nearest neighbor path realizing $d_\omega (0,x)$, which is chosen by a deterministic algorithm, contains $n$ elements of $\mathcal{I}_L$ and is included in $\bigcup_{y \in \mathcal{A}_n} B_L(y)$. 
Furthermore, for $k \in [n/2^d,n]$ and $i \in [1,2^d]$ we define the event $G(k,i,\mathcal{A}_n)$ as follows:
$\bigcup_{y \in \mathcal{A}_n} B_L(y)$ contains exactly $k$ elements of $\mathcal{C}_L^{(i)}$ and at least $\delta n$ of its $k$ elements of $\mathcal{C}_L^{(i)}$ are unblocked. 
Note that some $\mathcal{C}_L^{(i_0)}$ has at least $n/2^d$ elements included in $\bigcup_{y \in \mathcal{A}_n} B_L(y)$,
and $\mathcal{P}(\omega)$-nearest neighbor paths realizing $d_\omega (0,x)$ intersect at least $|x|_\infty /(2L)$ elements of $\mathcal{I}_L$.
If at most $\delta n$ these elements of $\mathcal{C}_L^{(i_0)}$ are unblocked,
then the realizing path passes through at least $n/2^d-\delta n=\delta n \geq \delta |x|_\infty /(2L)$ blocked boxes in $\mathcal{C}_L^{(i_0)}$.
Therefore, one has
\begin{align*}
 \Gamma (x,\mathcal{A}_n) \subset \bigcup_{n/2^d \leq k \leq n} \bigcup_{i=1}^{2^d} G(k,i,\mathcal{A}_n).
\end{align*}
On the other hand, for each $i \in [1,2^d]$, the events $\{C \textrm{ is unblocked} \}$
for $C \in \mathcal{C}_L^{(i)} \subset \{ B_L(y);y \in \mathcal{A}_n\}$ are independent and have the same probability $p_L$. 
Set $\delta'(L):=\delta-p_L$ and 
\begin{figure}[th]
\begin{center}
\input{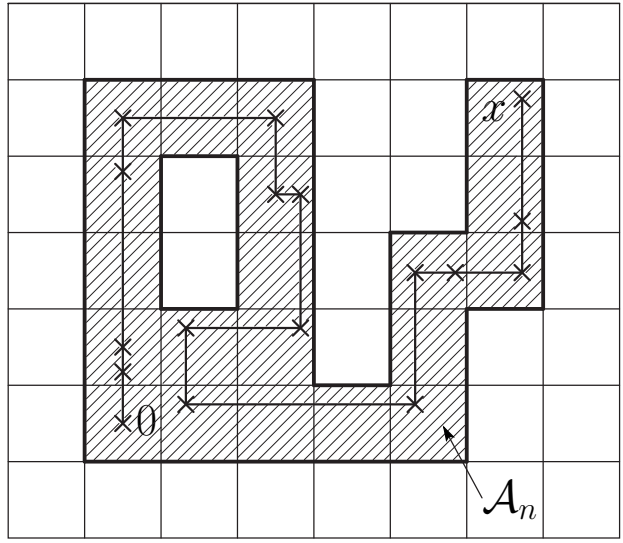}
\caption{A $\mathcal{P}(\omega)$-nearest neighbor path realizing $d_\omega (0,x)$ and a lattice animal $\mathcal{A}_n$. 
         The crosses stand for sites on a $\mathcal{P}(\omega)$-nearest neighbor path.}
\label{fig:result_1}
\end{center}
\end{figure}
\begin{align*}
 \alpha (L)
 := (p_L+\delta' (L)) \log \frac{p_L+\delta' (L)}{p_L}
    +(1-p_L-\delta' (L)) \log \frac{1-p_L-\delta' (L)}{1-p_L}.
\end{align*}
The Chernoff bound yields 
\begin{align*}
 \Q (G(k,i,\mathcal{A}_n)) \leq \exp \{ -\alpha (L)k \} .
\end{align*}
This ensures that 
\begin{align*}
 \Q (\{ 0,x \in \mathcal{P} \} \cap \Gamma (x,\mathcal{A}_n))
 &\leq \sum_{n/2^d \leq k \leq n} \sum_{i=1}^{2^d} \Q (G(k,i,\mathcal{A}_n))\\
 &\leq 2^dn\exp \{ -\alpha (L)\delta n \} .
\end{align*}
Since $\alpha(L) \to \infty$ as $L \to \infty$, we can find $L$ satisfying 
\begin{align*}
 (2d)^{2n}2^dn\exp \{ -\alpha (L)\delta n \} \leq 2^dne^{-2n},\qquad n \geq 1. 
\end{align*}
Recall that the number of lattice animals on $2L \Z^d$, of size $n$, containing the origin, is roughly bounded from above by $(2d)^{2n}$, see \cite[Lemma 1]{MR1241039}. 
It follows that we get 
\begin{align*}
 &\Q (0,x \in \mathcal{P},\,d_\omega (0,x)<\delta |x|_\infty /(2L))\\
 &\leq \sum_{n \geq |x|_\infty /(2L)} (2^d)^{2n} 2^dn\exp \{ -\alpha (L)\delta n \} \\
 &\leq 2^d \sum_{n \geq |x|_\infty /(2L)} \exp \{ -n \} \\
 &\leq \frac{2^d}{1-e^{-1/2}} \exp \{ -|x|_\infty /(2L) \}.
\end{align*}
Therefore, Lemma~\ref{lem:result_1} follows.
\end{proof}

After the preparation above, let us show \eqref{eq:corrector_2} in (iv) of Proposition \ref{prop:corrector_2}. 

\begin{prop} \label{thm:result_3}
Bound \eqref{eq:corrector_2} in (iv) of Proposition \ref{prop:corrector_2} holds. 
\end{prop}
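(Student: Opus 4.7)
The plan is to combine the graph-distance estimate \eqref{eq:result_20} with the comparison between graph distance and Euclidean distance provided by Lemma \ref{lem:result_1}. The idea is that, on a $\P$-full-measure event, $|y-x|$ is bounded above by a constant multiple of $d_\omega(x,y)$ as soon as $|y-x|$ is at least logarithmic in $n$, and the $\sqrt{t}$-bound \eqref{eq:result_20} for $E_\omega^x[d_\omega(x,Y_t)]$ then transfers to the Euclidean version.

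The first step is to upgrade Lemma \ref{lem:result_1} to a statement uniform over all basepoints in a box. Using stationarity (A3), Lemma \ref{lem:result_1} gives $\Q(x,y\in\mathcal{P},\, d_\omega(x,y) \leq \rho|y-x|) \leq c_5 e^{-c_6|y-x|}$ for all $x,y\in\Z^d$. A union bound over $x$ with $|x|\leq n$ and $y$ with $|y-x|\geq m$ yields
\[
\Q\bigl(\exists\, x,y\in\mathcal{P}:\ |x|\leq n,\ |y-x|\geq m,\ d_\omega(x,y)\leq \rho|y-x|\bigr) \leq c\, n^d e^{-c' m}.
\]
Taking $m=m_n := C\log n$ for $C$ large enough makes the bound summable in $n$, and the Borel--Cantelli lemma produces a set $\hat{\Omega}_3 \subset \Omega_0$ of full $\P$-measure and a random threshold $n_1(\omega)$ such that on $\hat{\Omega}_3$, for every $n \geq n_1(\omega)$, every $x \in \mathcal{P}(\omega)$ with $|x|\leq n$, and every $y \in \mathcal{P}(\omega)$ with $|y-x|\geq m_n$, one has $|y-x|\leq \rho^{-1} d_\omega(x,y)$.

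Intersecting $\hat{\Omega}_3$ with the full-measure event on which \eqref{eq:result_20} holds, for $\omega$ in the intersection, $n\geq n_1(\omega)$, $x\in\mathcal{P}(\omega)$ with $|x|\leq n$, and $t\geq 1$, splitting $E_\omega^x[|Y_t-x|]$ at $m_n$ gives
\[
E_\omega^x[|Y_t-x|] \leq m_n + \rho^{-1} E_\omega^x[d_\omega(x,Y_t)] \leq m_n + c_9\sqrt{t},
\]
where $c_9=c_9(\omega)$ is the constant coming from \eqref{eq:result_20}. Setting $b_n := (\log n)^2$, which satisfies $b_n=o(n^2)$, we have $m_n/\sqrt{b_n}=O(1)$, so $E_\omega^x[|Y_t-x|]/\sqrt{t}$ is uniformly bounded for $t\geq b_n$; the finitely many $n < n_1(\omega)$ are absorbed into the constant.

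The principal obstacle is the uniformization step, since Lemma \ref{lem:result_1} is only pointwise in $x$. However, the exponential decay in $|y-x|$ dominates the polynomial count $n^d$ of basepoints by a wide margin, so a single union bound followed by Borel--Cantelli suffices and no sharper control over $d_\omega$ is required.
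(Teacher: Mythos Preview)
Your proof is correct and follows essentially the same route as the paper's: both use stationarity to transport Lemma~\ref{lem:result_1} to arbitrary basepoints, take a union bound over $x$ with $|x|\le n$ and $y$ with $|y-x|\ge C\log n$, apply Borel--Cantelli to obtain the comparison $|y-x|\le\rho^{-1}d_\omega(x,y)$ above a logarithmic threshold, and then split $E_\omega^x[|Y_t-x|]$ at that threshold and invoke \eqref{eq:result_20}. The only cosmetic difference is the choice of $b_n$ (you take $(\log n)^2$, the paper takes $b_n=n$); both satisfy $b_n=o(n^2)$ and make the $\log n/\sqrt{b_n}$ term bounded.
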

\begin{proof}
We first show that there is a positive constant $c_7$ depending only on $d$ and $\hat{\Omega}_2 \subset \Omega$ of full $\Q$-measure such that the following holds for $\omega \in \hat{\Omega}_2$: 
there exists a positive integer $N(\omega)$ such that $d_\omega (z,y) \geq \rho |z-y|$ holds for all $n \geq N(\omega)$ and all $z,y \in \mathcal{P}(\omega)$ with $|z| \leq n,\,|z-y| \geq c_7 \log n$. 
Set $c_7:=2(d+2)/c_6$. 
From Lemma \ref{lem:result_1}, we have for some positive constant $c_8$ depending only on $d$ and for all sufficiently large $N$, 
\begin{align*}
 &\sum_{n=N}^\infty \sum_{\substack{z,y \in \Z^d\\ |z| \leq n \\|z-y| \geq c_7 \log n}}
  \Q (0,y-z \in \mathcal{P},\,d_\omega (0,y-z)<\rho |z-y|)\\
 &\leq \sum_{n=N}^\infty \sum_{\substack{z,y \in \Z^d\\ |z| \leq n \\|z-y| \geq c_7 \log n}} c_5 \exp \{ -c_6|z-y| \}\\
 &\leq \sum_{n=N}^\infty \sum_{|z| \leq n} \sum_{K \geq c_7 \log n} c_8 K^{d-1} \exp \{ -c_6K \} .
\end{align*}
It is easy to see that this sum converges. 
By the Borel--Cantelli lemma the assertion stated in the beginning of the proof is verified. 

Fix $\omega \in \hat{\Omega}_2$. 
We then obtain for all $n \geq N(\omega),\,|z| \leq n$ and $t \geq n$, 
\begin{align*}
 \frac{E_\omega^z[|z-Y_t|]}{\sqrt{t}}
 &\leq \frac{1}{\sqrt{t}} \bigl( \rho^{-1} E_\omega^z[d_\omega (z,Y_t)]+c_7 \log n \bigr) \\
 &\leq \rho^{-1} \frac{E_\omega^z[d_\omega (z,Y_t)]}{\sqrt{t}}+c_7 \frac{\log n}{n},
\end{align*}
which proves bound \eqref{eq:corrector_2} in (iv) of Proposition \ref{prop:corrector_2}. 
\end{proof}

Finally, we will prove that the corrector $\chi$ satisfies condition (iii) of Proposition \ref{prop:corrector_2}. 
To do this, let us introduce some notation and state a lemma. 
Denote by $E_{\theta,n}$ the event that for any $y \in \mathcal{P} \cap [-n,n]^d$, there exists a $\mathcal{P}$-nearest neighbor path $(0=z_0,z_1,\dots,z_m=y)$ from $0$ to $y$ such that $\max_{0 \leq k \leq m} |z_k|_\infty \leq n^\theta$. 

\begin{lem} \label{lem:result_x}
Suppose that the following condition (C) holds: 
\begin{itemize}
 \item[\bf (C)] For some $\theta >0$ the sequence $(\P(E_{\theta,n}^c))_{n=1}^\infty$ is summable. 
\end{itemize}
Then, condition (iii) of Proposition \ref{prop:corrector_2} holds. 
\end{lem}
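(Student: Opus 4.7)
The plan is to combine the connectivity information supplied by $E_{\theta,n}$ with a telescoping identity for the corrector along a $\mathcal{P}(\omega)$-nearest neighbor path, controlling each edge-increment of $\chi$ via the square integrability (iii) of Proposition \ref{prop:corrector_1}.

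First I would apply the Borel--Cantelli lemma to hypothesis (C): this produces an $\P$-almost surely finite $N_1(\omega)$ with $E_{\theta,n}$ valid for every $n \geq N_1(\omega)$. Hence, for such $n$, any site $y \in \mathcal{P}(\omega)$ with $|y| \leq n$ can be joined to the origin by a $\mathcal{P}(\omega)$-nearest neighbor path $(0 = z_0, z_1, \dots, z_m = y)$ lying in $[-n^\theta, n^\theta]^d$. After shortcutting repeated vertices we may take the path to be self-avoiding, so $m \leq (2n^\theta + 1)^d$. Telescoping the differences of $\chi$ then yields
\begin{align*}
|\chi(y,\omega)| \leq \sum_{k=1}^m |\chi(z_k,\omega) - \chi(z_{k-1},\omega)| \leq m \cdot \max_{0 \leq k < m} R(z_k,\omega),
\end{align*}
where $R(x,\omega) := \max_{w \in \mathcal{N}_x(\omega)} |\chi(w,\omega) - \chi(x,\omega)|$ denotes the maximal edge-increment of $\chi$ at $x$.

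Next I would control $R(x,\omega)$ uniformly over $x \in \mathcal{P}(\omega)$ with $|x|_\infty \leq n^\theta$. Property (iii) of Proposition \ref{prop:corrector_1} at once gives the uniform-in-$x$ bound $\E[R(x,\cdot)^2 \1{\{x \in \mathcal{P}\}}] \leq c_1$. A Chebyshev estimate together with a union bound over the $O(n^{\theta d})$ sites in $[-n^\theta,n^\theta]^d$ then yields
\begin{align*}
\P\!\left( \max_{\substack{x \in \mathcal{P}(\omega) \\ |x|_\infty \leq n^\theta}} R(x,\omega) > n^{\theta d/2 + 1} \right) \leq \frac{C\,n^{\theta d}}{n^{\theta d + 2}} = \frac{C}{n^2},
\end{align*}
which is summable. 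A second Borel--Cantelli argument supplies $N_2(\omega) < \infty$ such that this maximum is bounded by $n^{\theta d/2 + 1}$ for every $n \geq N_2(\omega)$.

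Combining both estimates, for $n \geq \max\{N_1(\omega), N_2(\omega)\}$ and every $y \in \mathcal{P}(\omega)$ with $|y| \leq n$,
\begin{align*}
|\chi(y,\omega)| \leq C\,n^{\theta d} \cdot n^{\theta d/2 + 1} = C\,n^{3\theta d/2 + 1},
\end{align*}
so condition (iii) of Proposition \ref{prop:corrector_2} holds with any deterministic exponent $\theta' > 3\theta d/2 + 1$. The main obstacle I anticipate is cleanly extracting the uniform $L^2$-bound on $R(x,\cdot)$ from Proposition \ref{prop:corrector_1}(iii) while correctly handling the passage between $\Q$ and $\P = \Q(\cdot \mid \Omega_0)$; once this is in hand, the remainder reduces to a standard Chebyshev plus two-step Borel--Cantelli argument.
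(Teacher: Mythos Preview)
Your proposal is correct and follows essentially the same route as the paper: telescope $\chi$ along the path supplied by $E_{\theta,n}$, control the edge increments via the square-integrability bound (iii) of Proposition~\ref{prop:corrector_1}, and conclude with Chebyshev plus Borel--Cantelli. The paper organizes the computation slightly differently---it bounds $\E[R_n^2 \1{E_{\theta,n}}]$ in a single step (apparently using a linear path-length bound encoded by the constant $\lambda$) and then applies one Chebyshev/Borel--Cantelli, whereas you first control the maximal edge increment by a separate Chebyshev/Borel--Cantelli and only then combine with the self-avoiding path-length bound $m\le (2n^\theta+1)^d$; the resulting polynomial exponents differ but the logic and ingredients are the same.
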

\begin{proof}
For $\omega \in \Omega_0$ let 
\begin{align*}
 R_n(\omega):=\max_{\substack{x \in \mathcal{P}(\omega )\\ |x|_\infty \leq n}} |\chi (x,\omega )|.
\end{align*}
From (iii) of Proposition \ref{prop:corrector_1}, we have 
\begin{align*}
 &\E[R_n^2 \1{E_{\theta ,n}}]\\
 &\leq (4\lambda n)^2 \sum_{|x|_\infty \leq n^\theta} \sum_{y \in \Z^d}
       \E[|\chi(x+y,\cdot )-\chi (x,\cdot )|^2 \1{\{ x \in \mathcal{P} \}} \1{\{ y \in \mathcal{N}_x \}} ]\\
 &\leq c_9 n^{\theta +2}
\end{align*}
for every $n \geq 1$ and some constant $c_9=c_9(\theta)$. 
Applying Chebyshev's inequality, we obtain for $\theta'>0$, 
\begin{align*}
 \P (R_n \1{E_{\theta ,n}} \geq n^{\theta'} ) \leq c_9 n^{-2\theta' +\theta +2}.
\end{align*}
Condition (C) thereby yields that for $\theta'>(\theta +3)/2$, 
\begin{align*}
 \sum_{n=0}^\infty \P (R_n \geq n^{\theta'} )
 &\leq \sum_{n=0}^\infty \P (R_n \1{E_{\theta ,n}} \geq n^{\theta'} )+\sum_{n=0}^\infty \P (E_{\theta ,n}^c)\\
 &\leq 1+c_9 \sum_{n=1}^\infty n^{-2\theta' +\theta +2}+\sum_{n=0}^\infty \P (E_{\theta ,n}^c)<\infty ,
\end{align*}
which proves condition (iii) of Proposition \ref{prop:corrector_2} by the Borel--Cantelli lemma. 
\end{proof}

Due to Lemma \ref{lem:result_x}, for the proof of condition (iii) of Proposition \ref{prop:corrector_2} it suffices to check condition (C).
For $x=(x_1,\dots,x_d) \in \Z^d$ and $2 \leq i \leq d$, let us denote by $F_{i,n}(x)$ the event that for any $y \in \mathcal{P} \cap [-n,n]^i \times \{ (x_{i+1},\dots,x_d) \}$, there exists a $\mathcal{P}$-nearest neighbor self-avoiding path $(x=r_0,r_1,\dots,r_m=y)$ from $x$ to $y$ such that $r_k \in [-n,n]^i \times \{ (x_{i+1},\dots,x_d) \},\,1 \leq k \leq m$. 
In addition,
let $G_{i,n}(x)$ be the version of $F_{i,n}(x)$
with $[-n,n]^i \times \{ (x_{i+1},\dots,x_d) \}$ replaced by
$\{ x_1 \} \times [-n,n]^i \times \{ (x_{i+2},\dots,x_d) \}$, i.e.,
$G_{i,n}(x)$ is the event that
for any $y \in \mathcal{P} \cap \{ x_1 \} \times [-n,n]^i \times \{ (x_{i+2},\dots,x_d) \}$,
there exists a $\mathcal{P}$-nearest neighbor self-avoiding path
$(x=r_0,r_1,\dots,r_m=y)$ from $x$ to $y$
such that $r_k \in \{ x_1 \} \times [-n,n]^i \times \{ (x_{i+2},\dots,x_d) \},\,1 \leq k \leq m$.

\begin{prop} \label{thm:result_4}
Condition (C) is satisfied, and therefore condition (iii) of Proposition \ref{prop:corrector_2} holds. 
\end{prop}
\begin{proof}
Let $\rho:=1-\Q(\Omega_0)$. 
Suppose that for some $i \in [2,d-1]$ there exist positive constants $C_1^{(i)},C_2^{(i)}$ such that for all $n$ large enough and all $x=x_1e_1+\dots+x_ie_i \in \Z^d \cap [-n,n]^d$, 
\begin{align}
 \Q (F_{i,n}(x)^c \cap \Omega_x )+\Q (G_{i,n}(x)^c \cap \Omega_x ) \leq C_1^{(i)} \rho^{C_2^{(i)}n}.
 \label{eq:result_22}
\end{align}
Denote by $H_{i,n}(z)$ the event that for all $-n \leq k \leq n$ there is a site $w=ke_1+w_2e_2+\dots+w_ie_i+z_{i+1}e_{i+1}$ such that $w \in \mathcal{P}$ and $-n \leq w_j \leq n$ for $2 \leq j \leq i$. 
A straightforward calculation then shows that we get, for $z=z_1e_1+\dots+z_{i+1}e_{i+1} \in \Z^d \cap [-n,n]^d$, 
\begin{align}
\begin{split}
 \Q (F_{i+1,n}(z)^c \cap \Omega_z )
 &\leq \Q (F_{i,n}(z)^c \cap \Omega_z )+\Q (H_{i,n}(z)^c)\\
 &\quad
       +\Q (F_{i+1,n}(z)^c \cap \Omega_z \cap F_{i,n}(z) \cap H_{i,n}(z)).
\end{split}
\label{eq:result_21}
\end{align}
By \eqref{eq:result_22} the first term of the right-hand side of \eqref{eq:result_21} is less than $C_1^{(i)} \rho^{C_2^{(i)}n}$. 
Moreover, it is easy to see that there are positive constants $c_{10}$ and $c_{11}$ such that for all sufficiently large $n$, the second term of the right-hand side of \eqref{eq:result_21} is bounded from above by $c_{10}\rho^{c_{11}n}$. 
Let us estimate the third term of the right-hand side of \eqref{eq:result_21}. 
On the event $H_{i,n}(z)$, there are sites $u_k \in \mathcal{P},\,-n \leq k \leq n$ such that 
\begin{align*}
 u_k \in \mathcal{U}(i,n,k):=\{ ke_1+v_2e_2+\dots+v_ie_i+z_{i+1}e_{i+1};-n \leq v_2,\dots,v_i \leq n \} .
\end{align*}
Note that, on the event 
\begin{align*}
 I_{i,n}(z):=\Omega_z \cap F_{i,n}(z) \cap H_{i,n}(z) \cap \bigcap_{k=-n}^n (G_{i,n}(u_k) \cap \Omega_{u_k}),
\end{align*}
for each $y=y_1e_1+\dots+y_{i+1}e_{i+1} \in [-n,n]^d \cap \mathcal{P}$ the site $u_{y_1}$ is connected to $y$ by a $\mathcal{P}$-nearest neighbor path included in $\{ y_1 \} \times [-n,n]^i \times \{ 0 \}^{d-i-1}$ and is connected to $z$ by a $\mathcal{P}$-nearest neighbor path included in $[-n,n]^i \times \{ 0 \}^{d-i}$. 
For this reason, the event $F_{i+1,n}(z)$ occurs on the event $I_{i,n}(z)$. 
We hence obtain 
\begin{align*}
 &\Q (F_{i+1,n}(z)^c \cap \Omega_z \cap F_{i,n}(z) \cap H_{i,n}(z))\\
 &\leq \sum_{\substack{(u_{-n},\dots, u_n)\\ \in \mathcal{U}(i,n,-n) \times\dots\times \mathcal{U}(i,n,n)}}
       \Q \Biggl( F_{i+1,n}(z)^c \cap \Omega_z \cap F_{i,n}(z) \cap H_{i,n}(z) \cap \bigcap_{k=-n}^n \Omega_{u_k} \Biggr) \\
 &\leq \sum_{\substack{(u_{-n},\dots, u_n)\\ \in \mathcal{U}(i,n,-n) \times\dots\times \mathcal{U}(i,n,n)}}
       \Biggl( \Q (F_{i+1,n}(z)^c \cap I_{i,n}(z))
               +\sum_{j=-n}^n \Q (G_{i,n}(u_j)^c \cap \Omega_{u_j}) \Biggr) \\
 &\leq C_1^{(i)}(2n+1)^{i+1} \rho^{C_2^{(i)}n},
\end{align*}
and therefore 
\begin{align*}
 \Q (F_{i+1,n}(z)^c \cap \Omega_z )
 &\leq C_1^{(i)} \rho^{C_2^{(i)}n}+c_{10}\rho^{c_{11}n}+C_1^{(i)}(2n+1)^{i+1} \rho^{C_2^{(i)}n}
\end{align*}
is shown. 
By the same argument as above, we can estimate $\Q (G_{i+1,n}(z)^c \cap \Omega_z )$, so there are some positive constants $C_1^{(i+1)}$ and $C_2^{(i+1)}$ such that for all $n$ large enough and all $z=z_1e_1+\dots+z_{i+1}e_{i+1} \in \Z^d \cap [-n,n]^d$, 
\begin{align*}
 \Q (F_{i+1,n}(z)^c \cap \Omega_z )+\Q (G_{i+1,n}(z)^c \cap \Omega_z )
 \leq C_1^{(i+1)} \rho^{C_2^{(i+1)}n}.
\end{align*}

By induction on $i$, it is enough to show \eqref{eq:result_22} in the case $i=2$. 
To simplify notation, let $d=2$. 
We treat only the estimate for $\Q (F_{2,n}(x)^c \cap \Omega_x ),\,x=x_1e_1+x_2e_2 \in \Z^2 \cap [-n,n]^2$, since an analogous statement with $\Q (G_{2,n}(x)^c \cap \Omega_x )$ instead of $\Q (F_{2,n}(x)^c \cap \Omega_x )$ can be derived similarly. 
Without loss of generality we can assume $x_2 \geq 0$. 
To this end, for $n \geq 1,\,u \in [0,n] \cap \Z,\,0<\delta<1$ and $L \geq 1$ let us introduce the events 
\begin{align*}
 &\Lambda_0 (u,L):=\{ (u-j)e_2 \in \mathcal{P} \textrm{ for some $1 \leq j \leq L$} \} ,\\
 &\Lambda_1 (u,\delta ,L,n)
  := \biggl\{ \frac{1}{2n+1} \sum_{i=-n}^n \1{\Lambda_0 (u,L)} \circ T_{ie_1}>1-\delta \biggr\} ,
\end{align*}
and $\Lambda_2 (u,L,n)$ defined as follows: 
for all $\ell \leq j \leq L$, there is an $0 \leq i_j \leq n$ such that 
\begin{align*}
 \omega (i_je_1)=\omega (i_je_1+(u-j)e_2)=1, 
\end{align*}
and for all $0 \leq m \leq \ell -1$, there is $k_m \in [0,n]$ such that 
\begin{align*}
 \omega (k_me_1+(u-m)e_2)=\omega (k_me_1+(u-(m+\ell ))e_2)= 1.
\end{align*}
The event $\Lambda_1 (u,\delta ,L,n)$ means that almost all vertical lines are blocked in the slab $[-n,n] \times [u-L,u]$ and the event $\Lambda_2 (u,L,n)$ means that all lines in the slab $[-n,n] \times [u-L,u]$ are connected by $\mathcal{P}$-nearest neighbor paths included in $[-n,n]^2$. 
We will estimate $\Q(\Lambda_1(u,\delta ,L,n)^c)$. 
Let $0<\delta<\Q(\Omega_0)$ and we choose $0<\delta_1<\delta$. 
Note that if 
\begin{align}
 n>\half \biggl( \frac{(1-\delta_1 )\ell}{\delta-\delta_1}-1 \biggr) ,
 \label{eq:invariance_16}
\end{align}
then 
\begin{align*}
 \frac{2n+1}{\lfloor (2n+1)/\ell \rfloor} \,\frac{1-\delta}{\ell}<1-\delta_1 .
\end{align*}
Let $I_k:=\{ -n+k+m \ell ;0\leq m \leq \lfloor (2n+1)/\ell \rfloor -1 \}$ for $0 \leq k \leq \ell -1$. 
We get for any $n$ satisfying \eqref{eq:invariance_16}, 
\begin{align*}
 &\Q (\Lambda_1 (u,\delta ,L,n)^c)\\
 &\leq \Q \Biggl( \frac{1}{2n+1} \sum_{k=0}^{\ell -1} \sum_{i \in I_k} \1{\Lambda_0 (u,L)} \circ T_{ie_1}
                  \leq 1-\delta \Biggr) \\
 &\leq \sum_{k=0}^{\ell -1} \Q \Biggl( \frac{1}{2n+1}
       \sum_{i \in I_k} \1{\Lambda_0 (u,L)} \circ T_{ie_1} \leq \frac{1-\delta}{\ell} \Biggr) \\
 &\leq \ell \,\Q \Biggl( \frac{1}{\lfloor (2n+1)/\ell \rfloor} \sum_{i \in I_0} \1{\Lambda_0 (x,L)} \circ T_{ie_1}
                         \leq \frac{2n+1}{\lfloor (2n+1)/\ell \rfloor} \,\frac{1-\delta}{\ell} \Biggr) \\
 &\leq \ell \,\Q \Biggl( \frac{1}{\lfloor (2n+1)/\ell \rfloor} \sum_{i \in I_0}
                         \1{\Lambda_0 (u,L)} \circ T_{ie_1}<1-\delta_1 \Biggr) .
\end{align*}
It is clear from (A2) and (A3) that we can take numbers $0<\delta_2<1$ and $L \geq 2\ell-1$ with $1-\delta_1 \leq (1-\delta_2) \,\Q (\Lambda_0(0,L))$. 
Thus, $\Q (\Lambda_1 (u,\delta ,L,n)^c)$ is smaller than 
\begin{align}
 \ell \,\Q \Biggl( \frac{1}{\lfloor (2n+1)/\ell \rfloor} \sum_{i \in I_0} \1{\Lambda_0 (u,L)} \circ T_{ie_1}
                   <(1-\delta_2) \,\Q (\Lambda_0 (0,L)) \Biggr) .
 \label{eq:invariance_17}
\end{align}
Using the Chernoff bound, we estimate \eqref{eq:invariance_17} from above by 
\begin{align*}
 \ell \exp \biggl\{ -\frac{\delta_2^2 \,\Q (\Lambda_0 (0,L))}{2} \biggl\lfloor \frac{2n+1}{\ell} \biggr\rfloor \biggr\} .
\end{align*}
It follows that 
\begin{align}
 \Q (\Lambda_1 (u,\delta ,L,n)^c)
 \leq \ell \exp \biggl\{ -\frac{\delta_2^2 \,\Q (\Lambda_0 (0,L))}{2} \biggl\lfloor \frac{2n+1}{\ell} \biggr\rfloor \biggr\} .
 \label{eq:invariance_18}
\end{align}
holds for all $n$ satisfying \eqref{eq:invariance_16}. 

Let us next estimate $\Q (\Lambda_2 (x,L,n)^c)$. 
Noting that by assumption (A2) $\omega (k \ell e_1),\,k \geq 0$ are independent under $\Q$, we have for $n \geq \ell$, 
\begin{align}
\begin{split}
 &\Q (\Lambda_2 (u,L,n)^c)\\
 &\leq \sum_{j=\ell}^L \Q \Bigl( \textrm{for every $0 \leq i \leq n/\ell$,}\\
 &\qquad \qquad \ \ 
                                 \textrm{$\omega (i\ell e_1)=\omega (i\ell e_1+(u-j)e_2)=1$ fails} \Bigr)\\
 &\quad
       +\sum_{m=0}^{\ell -1}
        \Q \Bigl( \textrm{for every $0 \leq k \leq n/\ell$, $\omega (k\ell e_1+(u-m)e_2)$}\\
 &\qquad \qquad \quad \ \ 
            \textrm{$=\omega (k\ell e_1+(u-(m+\ell ))e_2)= 1$ fails} \Bigr)\\
 &\leq (L-\ell +1)(1-\Q (\Omega_0 )^2)^{n/\ell}+\ell (1-\Q (\Omega_0 )^2)^{n/\ell}\\
 &= (L+1)(1-\Q (\Omega_0 )^2)^{n/\ell}.
\end{split}
\label{eq:invariance_19}
\end{align}

\begin{figure}[t]
\begin{center}
\input{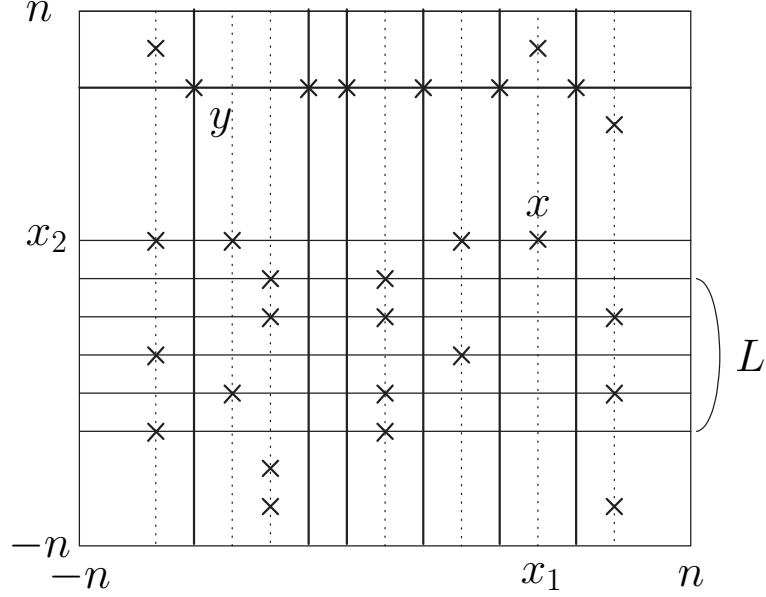}
\caption{The event $E_{1,n}(x)^c \cap \Omega_x \cap \Lambda_1 (x_2,\delta ,L,n) \cap \Lambda_2 (x_2,L,n)$. 
         The crosses standing for $L$ horizontal lines are connected to $x$ by $P(\omega)$-nearest neighbor paths 
         included in $[-n,n]^2$. 
         There exists some site $y \in \mathcal{P}(\omega) \cap [-n,n]^2$ such that 
         the site $y$ is not connected to $x$ and solid vertical lines cannot have points in the slab $[-n,n] \times [x_2-L,x_2]$.
         }
\label{fig:result_2}
\end{center}
\end{figure}
We shall estimate the left-hand side of \eqref{eq:result_22}. 
Bounds \eqref{eq:invariance_18} and \eqref{eq:invariance_19} ensure that 
\begin{align}
\begin{split}
 &\Q (E_{1,n}(x)^c \cap \Omega_x )\\
 &\leq \Q (\Lambda_1 (x_2,\delta ,L,n)^c)+\Q (\Lambda_2 (x_2,L,n)^c)\\
 &\quad
       +\Q (E_{1,n}(x)^c \cap \Omega_x \cap \Lambda_1 (x_2,\delta ,L,n) \cap \Lambda_2 (x_2,L,n))\\
 &\leq \exp \biggl\{ -\frac{\delta_2^2 \,\Q (\Lambda_0 (L))}{2} \biggl\lfloor \frac{n+1}{\ell} \biggr\rfloor \biggr\}
       +L(1-\Q (\Omega_0 )^2)^{n/\ell}\\
 &\quad
       +\Q (E_{1,n}(x)^c \cap \Omega_x \cap \Lambda_1 (x_2,\delta ,L,n) \cap \Lambda_2 (x_2,L,n)).
\end{split}
\label{eq:invariance_20}
\end{align}
holds for $x=x_1e_1+x_2e_2 \in \Z^2 \cap [-n,n]^2$. 
We now prove that there is $y_1e_1+y_2e_2 \in \mathcal{P} \cap [-n,n]^2$ such that 
\begin{align}
 \sum_{i=-n}^n \1{\{ \omega (ie_1+y_2e_2)=1 \}}<\delta (2n+1)
 \label{eq:resupt_10}
\end{align}
holds on the event $E_{1,n}(x)^c \cap \Omega_x \cap \Lambda_1 (x_2,\delta ,L,n) \cap \Lambda_2 (x_2,L,n)$. 
We suppose that on the event $\Omega_x \cap \Lambda_1 (x_2,\delta ,L,n) \cap \Lambda_2 (x_2,L,n)$, bound \eqref{eq:resupt_10} fails for all $y_1e_1+y_2e_2 \in \mathcal{P} \cap [-n,n]^2$. 
On the event $\Omega_x \cap \Lambda_1 (x_2,\delta ,L,n)$, there exist at least $\lceil (1-\delta)(2n+1) \rceil$ vertical lines contained in $[-n,n]^2$ such that each of these vertical lines has a site in $\mathcal{P}$ standing for the slab $[-n,n] \times [x_2-L,x_2]$. 
Moreover, on the event $\Omega_x \cap \Lambda_2 (x_2,L,n)$, all lines in the slab $[-n,n] \times [x_2-L,x_2]$ are connected to $x$ by $\mathcal{P}$-nearest neighbor paths included in $[-n,n]^2$. 
It follows that on the event $\Omega_x \cap \Lambda_1 (x_2,\delta ,L,n) \cap \Lambda_2 (x_2,L,n)$, at least $\lceil (1-\delta)(2n+1) \rceil$ vertical lines contained in $[-n,n]^2$ are connected to $x$ by $\mathcal{P}$-nearest neighbor paths. 
On the other hand, on the event $\Omega_x \cap \Lambda_1 (x_2,\delta ,L,n) \cap \Lambda_2 (x_2,L,n)$ there is a horizontal line contained in $[-n,n]^2$ such that it does not have sites connected to $x$ by a $\mathcal{P}$-nearest neighbor path included in $[-n,n]^2$ and this horizontal line has at least $\lceil \delta (2n+1) \rceil$ sites in $\mathcal{P}$. 
This means that the number of vertical lines contained in the box $[-n,n]^2$ has to be strictly greater than $(1-\delta)(2n+1)+\delta (2n+1)=2n+1$, which contradicts that the number of vertical lines contained in the box $[-n,n]^2$ is equal to $2n+1$. 
Therefore, on the event $\Omega_x \cap \Lambda_1 (x_2,\delta ,L,n) \cap \Lambda_2 (x_2,L,n)$, bound \eqref{eq:resupt_10} holds for some $y_1e_1+y_2e_2 \in \mathcal{P} \cap [-n,n]^2$. 

The same argument as in the proof of \eqref{eq:invariance_18} implies 
\begin{align}
\begin{split}
 &\Q (E_{1,n}(x)^c \cap \Omega_x \cap \Lambda_1 (x_2,\delta ,L,n) \cap \Lambda_2 (x_2,L,n))\\
 &\leq (2n+1)^2 \Q \biggl( \sum_{i=-n}^n \1{\{ \omega (ie_1)=1 \}}<\delta (2n+1) \biggr)\\
 &\leq \ell (2n+1)^2
       \exp \biggl\{ -\frac{\delta_3^2 \,\Q (\Omega_0 )}{2} \biggl\lfloor \frac{2n+1}{\ell} \biggr\rfloor \biggr\}
\end{split}
\label{eq:invariance_21}
\end{align}
for all sufficient large $n$ and some $0<\delta_3<1$. 
According to \eqref{eq:invariance_20} and \eqref{eq:invariance_21}, bound \eqref{eq:result_22} holds for $i=2$. 
\end{proof}

\begin{flushleft}
 \bf Acknowledgments
\end{flushleft}
\vspace{-0.5em}
The author is grateful to
Prof. Shigenori Matsumoto, Prof. Takao Nishikawa, Prof. Ryoki Fukushima and Prof. Jun Misumi
for discussions on this problem.
I would also like to express my profound gratitude to the
reviewer for the very careful reading of the manuscript.

\bibliographystyle{jabbrv}
\bibliography{ref}
\end{document}